\newtheorem{theorem}{Theorem}
\newtheorem{remark}{Remark}
\newtheorem{lemma}{Lemma}
\newtheorem{definition}{Definition}
\DeclareMathAlphabet{\mathpzc}{OT1}{pzc}{m}{it}
\begin{document}

\title{Sharp  estimates for the Schr\"odinger equation associated with the twisted Laplacian}

\author{Duv\'an Cardona}
\affil{Pontificia Universidad Javeriana, Department of Mathematics, Bogot\'a- Colombia.\vspace{0.2cm}}
\affil{Current address: Ghent University, Department of Mathematics: Analysis, Logic and Discrete Mathematics,  Krijgslaan 281, S8
9000 Gent-Belgium\vspace{0.2cm}\\}
\affil[1]{duvanc306@gmail.com}
\abstract{In this note  we obtain  some Strichartz estimates for the Schr\"odinger equation associated with the  twisted Laplacian on $\mathbb{C}^{n}\cong \mathbb{R}^{2n}.$   The initial data will be considered in suitable Sobolev spaces associated to the twisted Laplacian. \\{\it The final version of this paper will appear in Rep. Math. Phys.}\\
 MSC 2010. Primary: 42B35, Secondary: 42C10, 35K15.\\ Keywords:{ Strichartz estimate, Schr\"odinger equation, twisted Laplacian, Landau Hamiltonian.}  }

\maketitle
\section{Introduction}
In this note we provide sharp Strichartz estimates for  the Schr\"odinger equation associated with the twisted Laplacian. Although we are focused in a typical problem of PDE, that is, to investigate the well-posedness for the Schr\"odinger equation \eqref{SEq},  the novelty of our paper is that we exploit the spectral properties  of the twisted Laplacian emerging from the seminal work of Koch and  Ricci \cite{KochRicci2007}. In order to present our main result, let us recall some fundamental notions.

Let us consider the  twisted Laplacian on $\mathbb{R}^{2n}$ (introduced by R. Strichartz in \cite{Twisted})
\begin{equation}
    \mathcal{L}=-\frac{1}{2}\Delta_{\mathfrak{X}},\,\,\Delta_{\mathfrak{X}}:=\sum_{j=1}^n(X_{j}^2+Y_{j}^2),
\end{equation}
 where $ \Delta_{\mathfrak{X}}$ is the sums of squares associated to the family $\mathfrak{X}=\{ X_j,Y_j :1\leq j\leq n\},$  $X_{j}=\partial_{x_j}+iy_j,$ $Y_{j}=\partial_{y_j}-ix_j,$ $i^2=-1,$ and $(x,y)\in \mathbb{R}^{2n}.$ In quantum mechanics,  the twisted Laplacian comes up as the Hamiltonian describing the motion of a charged particle $\textbf{q}$ in a uniform magnetic field $\textbf{B}=B\hat{\textbf{k}},$ $\hat{\textbf{k}}=(0,0,1)\in \mathbb{R}^3,$ perpendicular to the plane $x$-$y,$ (see the seminal work of Landau \cite{Landau1930} or Landau and  Lifshitz \cite[Chapter XV]{LandauLifshitz}). Moreover, for $n=1,$ the eigenvalues of $\mathcal{L}$ are known as Landau levels, which are the corresponding energy levels of the charged particle $\textbf{q}$ (see Landau \cite{Landau1930}),  and the
corresponding eigenfunctions of $\mathcal{L}$ are given by the Wigner transforms of the Hermite functions (see e.g. Molahajloo, Rodino and Wong \cite[pag. 78]{Molahajloo}).

In this note we obtain sharp Strichartz estimates for the Schr\"odinger equation associated with $\mathcal{L}$ which is  given by
\begin{equation}\label{SEq}
iu_t(t,z)+\frac{1}{2}\Delta_{\mathfrak{X}}u(t,z)=0,\,\,z=(x,y), 
\end{equation}
with initial data $u(0,\cdot)=f$ defined on $\mathbb{R}^{2n}.$  Estimates for this model and other relationed with the Landau Hamiltonian have been extensively developed in the references Ruzhansky and Tokmagambetov \cite{Ruz-Tok,Ruz-Tok',Ruz-Tok-1},  P.K.  Ratnakumar \cite{Rat2008}, P.K. Ratnakumar and V. K. Sohani\cite{RatSoha2013,RatSoha2015}, J. Tie\cite{Tie} and Z. Zhang and S. Zheng \cite{ZhangZheng2010}. The  obtained results in some of these references are analogues of the well known regularity properties for the  classical Schr\"odinger equation
\begin{equation}\label{SEq'}
iu_t(t,x)+\Delta_x u(t,x)=0, 
\end{equation}
investigated in the classical works of J. Ginibre and G. Velo \cite{GinVel},   A. Moyua and L. Vega \cite{MoyuaVega}, M. Keel and T. Tao \cite{KeelTao} and references therein. 

Let us observe that in view of the identity
\begin{equation}
-\Delta_{\mathfrak{X}}=-\Delta_{(z)}+|z|^2,\,\,\Delta_{(z)}:=\Delta_{x,y}+i(y,-x)\cdot (\nabla_x,\nabla_y )
\end{equation} where $\Delta_{x,y}$ is the Laplacian on $\mathbb{R}^{2n},$ $\nabla_x$ is the standard gradient field, and taking into account that the quantum harmonic oscillator is given by $H=-\Delta_x+|x|^2,$ the twisted Laplacian $\mathcal{L}=-\frac{1}{2}\Delta_{ \mathcal{L} }$ is sometimes called the quantum complex harmonic oscillator.

Taking into account the following estimate  proved by P.K.  Ratnakumar \cite{Rat2008}  
    \begin{equation}\label{rat2008}
     \Vert u(t,z)\Vert_{L^q_{t}([0,2\pi],\,L^p(\mathbb{R}^{2n}))}\leq C\Vert f\Vert_{L^{2}(\mathbb{R}^{2n})},
    \end{equation}  for $2<q<\infty$ and $\frac{1}{q}\geq n(\frac{1}{2}-\frac{1}{p})$ or $1\leq q\leq  2$ and $2\leq p<\frac{2n}{n-1},$ $n\geq 1$, we will consider the initial data $f$ in Sobolev spaces associated to $\mathcal{L}$ but modeled on $L^2$. It is important to mention that in P.K. Ratnakumar and V.K. Sohani \cite{RatSoha2013,RatSoha2015} the previous Ratnakumar result was extended to the non-homogeneous (and possibly non-linear) Sch\"odinger equation associated to $\mathcal{L}$. Also,
   the Schr\"odinger equation associated to $\mathcal{L},$ but with non-linear  polynomials terms can be found in Z. Zhang and S. Zheng  \cite{ZhangZheng2010}.

In order to motivate our main result, let us consider  the Schr\"odinger equation associated to the harmonic oscillator:
\begin{equation}
    iu_t(t,x)-Hu(x,t)=0,\,\,\,u(0,\cdot)=f.
\end{equation} The main result  in B. Bongioanni and  K.M. Rogers, \cite{BonRog} is the following sharp theorem: for  $\frac{2(d+2)}{n}\leq p\leq \infty,$ and $2\leq q< \infty$ with $\frac{1}{q}\leq \frac{d}{2}(\frac{1}{2}-\frac{1}{p}),$  
\begin{equation}\label{BV}
\Vert u(t,x)\Vert_{L^p_{x}(\mathbb{R}^d\,, L^q_{t}[0,2\pi])}\leq C_s\Vert f\Vert_{\mathcal{H}^s(\mathbb{R}^d)}
\end{equation}
holds true for all $s\geq s_{d,p,q}:=d(\frac{1}{2}-\frac{1}{p})-\frac{2}{q}.$ If $s<s_{d,p,q}$ then \eqref{BV} is false.  
 In the result above $\mathcal{H}^s$ is the Sobolev space associated to $H$ and with norm $\Vert f\Vert_{\mathcal{H}^s}:=\Vert H^{\frac{s}{2}}f \Vert_{L^2}.$ The proof of  \eqref{BV} involves Strichartz estimates of M. Keel and T. Tao \cite{KeelTao} and
 Wainger's Sobolev embedding theorem.
 In terms of the Sobolev space $W^{s,H}(\mathbb{R}^d)$ defined by the norm $\Vert f\Vert_{W^{s,H}(\mathbb{R}^d)}=\Vert H^sf\Vert_{L^2}=\Vert f\Vert_{\mathcal{H}^{2s}},$ \eqref{BV} can be formulated as
 \begin{equation}
 \Vert u(t,x)\Vert_{L^p_{x}(\mathbb{R}^d\,, L^q_{t}[0,2\pi])}\leq C_s\Vert f\Vert_{W^{s,H}(\mathbb{R}^d)}
\end{equation}
for all $s\geq s'_{d,p,q}:=\frac{d}{2}(\frac{1}{2}-\frac{1}{p})-\frac{1}{q}.$ Because, the twisted Laplacian acts on complex functions in $\mathbb{R}^d,$ $d=2n,$ we expect to obtain  similar results to \eqref{BV}, but, on Sobolev spaces $W^{s,\mathcal{L}}$ associated to $\mathcal{L},$ when $s\geq \frac{d}{2}(\frac{1}{2}-\frac{1}{p})-\frac{1}{q}=n(\frac{1}{2}-\frac{1}{p})-\frac{1}{q}.$
  Our main theorem can be announced as follows.

 \begin{theorem}\label{MAINtHEO'} Let us assume $n\geq 1,$  $2\leq q<\infty,$ and $2\leq p\leq \infty.$ Then, the following estimate
\begin{equation}\label{sobolecestimate'}
\Vert u(t,z) \Vert_{L^{p}_z[\mathbb{R}^{2n},\,L^q_t[0,2\pi]]}\leq C\Vert f\Vert_{W^{s,\,\mathcal{L}}(\mathbb{R}^{2n})} \end{equation}
holds true for every $s\geq \varkappa_{p,q},$ where $\varkappa_{p,q}:=\frac{1}{2}(\frac{1}{2}+\frac{1}{p})-\frac{1}{q}$ for $2\leq p\leq \frac{4n+2}{2n-1}$ and $\varkappa_{p,q}:=n(\frac{1}{2}-\frac{1}{p})-\frac{1}{q}$ for $ \frac{4n+2}{2n-1}\leq p\leq \infty.$  For $q=2,$ the Strichartz estimate \eqref{sobolecestimate'} is sharp in the sense that this inequality is the best possible.
 \end{theorem}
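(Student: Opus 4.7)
The plan is to base the argument on the spectral decomposition of $\mathcal{L}$ together with the sharp $L^{p}$ estimates for its spectral projectors due to Koch and Ricci \cite{KochRicci2007}. Since the spectrum of $\mathcal{L}$ consists of the Landau levels $\lambda_{k}=2k+n$, $k\geq 0$, with associated orthogonal projectors $P_{k}$, the solution of \eqref{SEq} admits the expansion
\begin{equation*}
u(t,z)=e^{-it\mathcal{L}}f(z)=\sum_{k\geq 0}e^{-it(2k+n)}P_{k}f(z),
\end{equation*}
so that, for each fixed $z\in\mathbb{R}^{2n}$, the slice $t\mapsto u(t,z)$ is a trigonometric series on $[0,2\pi]$ with integer frequencies. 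The Koch--Ricci bound I shall invoke takes the form $\|P_{k}g\|_{L^{p}(\mathbb{R}^{2n})}\leq C(1+k)^{\sigma(p)}\|P_{k}g\|_{L^{2}(\mathbb{R}^{2n})}$ with $\sigma(p)=\tfrac{1}{2p}-\tfrac{1}{4}$ on $[2,\tfrac{4n+2}{2n-1}]$ and $\sigma(p)=n(\tfrac{1}{2}-\tfrac{1}{p})-\tfrac{1}{2}$ on $[\tfrac{4n+2}{2n-1},\infty]$, and it is exactly this two-regime shape that produces the piecewise definition of $\varkappa_{p,q}$.

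The upper bound then proceeds in three elementary steps. First, for each fixed $z$, apply the periodic Sobolev embedding $H^{\beta}(\mathbb{T})\hookrightarrow L^{q}(\mathbb{T})$, valid for $2\leq q<\infty$ with $\beta=\tfrac{1}{2}-\tfrac{1}{q}$, to the trigonometric series in $t$ to obtain
\begin{equation*}
\|u(\cdot,z)\|_{L^{q}_{t}[0,2\pi]}\leq C\Bigl(\sum_{k\geq 0}(1+k)^{2\beta}|P_{k}f(z)|^{2}\Bigr)^{1/2}.
\end{equation*}
Second, take the $L^{p}_{z}$ norm and, since $p\geq 2$, interchange norm and square-summation by Minkowski's inequality, landing on $\bigl(\sum_{k}(1+k)^{2\beta}\|P_{k}f\|_{L^{p}_{z}}^{2}\bigr)^{1/2}$. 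Third, insert the Koch--Ricci bound to majorise this by $C\bigl(\sum_{k}(1+k)^{2(\beta+\sigma(p))}\|P_{k}f\|_{L^{2}}^{2}\bigr)^{1/2}$, which, since $\lambda_{k}\sim 1+k$ and $\mathcal{L}\geq n>0$, is comparable to $\|\mathcal{L}^{\beta+\sigma(p)}f\|_{L^{2}}=\|f\|_{W^{\beta+\sigma(p),\mathcal{L}}}$. A direct check shows $\beta+\sigma(p)=\varkappa_{p,q}$ on both branches of $p$, which gives \eqref{sobolecestimate'} at the critical regularity; the bound for larger $s$ is then immediate from $\mathcal{L}\geq n>0$.

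For the sharpness when $q=2$, Parseval in $t$ reduces the left-hand side of \eqref{sobolecestimate'} to $(2\pi)^{1/2}\|(\sum_{k}|P_{k}f|^{2})^{1/2}\|_{L^{p}_{z}}$, and testing with data $f=P_{k_{0}}\varphi$ concentrated on a single Landau level collapses the inequality to $\|P_{k_{0}}\varphi\|_{L^{p}}\leq Ck_{0}^{\varkappa_{p,2}}\|P_{k_{0}}\varphi\|_{L^{2}}$. Since $\varkappa_{p,2}=\sigma(p)$, its sharpness is precisely the sharpness of the Koch--Ricci projector bound. The principal obstacle in this plan is neither the Sobolev nor the Minkowski step---both are routine---but the correct identification of the two-branch exponent $\sigma(p)$, and in particular the \emph{negative} value on the small-$p$ range, which has no analogue in the standard harmonic oscillator analysis of Bongioanni--Rogers and is the sole reason that $\varkappa_{p,q}$ bends at $p=\tfrac{4n+2}{2n-1}$.
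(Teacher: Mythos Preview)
Your proposal is correct and follows essentially the same route as the paper: pointwise-in-$z$ application of the periodic Sobolev embedding (which the paper phrases as Wainger's theorem with $r=2$), followed by Minkowski in $z$ for $p\geq 2$, then the Koch--Ricci projector bound, and sharpness at $q=2$ by testing on a single Landau level. The only cosmetic difference is that the paper packages the intermediate square-function quantities as Triebel--Lizorkin norms $\mathpzc{F}^{s}_{p,2}$ (Lemmas~1 and~2) before invoking Koch--Ricci, whereas you carry out the same chain of inequalities directly.
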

 
\begin{remark}\label{r1}
 If $2\leq p\leq \frac{4n+2}{2n-1}$ then $\frac{n}{2n+1}-\frac{1}{q}\leq \varkappa_{p,q}\leq \frac{1}{2}-\frac{1}{q}.$ If we assume $2\leq q\leq 2+\frac{1}{n},$  then $\frac{n}{2n+1}-\frac{1}{q}\leq 0$. So, in the interval $I_p=[2, \frac{4n+2}{2n-1}]$ containing  $p,$ the parameter $\varkappa_{p,q}$ admits non-positive values. Moreover, in this case $\varkappa_{p,q}\leq 0$  for $2\leq q\leq  2(\frac{1}{2}+\frac{1}{p})^{-1}.$ Similarly, $ \varkappa_{p,q}< 0$  if and only if  $\frac{n}{2n+1}-\frac{1}{q}<0,$ which in turn  is equivalent to the condition $2\leq q<  2(\frac{1}{2}+\frac{1}{p})^{-1}$.
 
 On the other hand, if $\frac{4n+2}{2n-1}\leq p\leq \infty$ then $\frac{n}{2n+1}-\frac{1}{q}\leq \varkappa_{p,q}\leq \frac{n}{2}-\frac{1}{q}.$ So, if we fix $2\leq q\leq 2+\frac{1}{n}, $ then $\frac{n}{2n+1}-\frac{1}{q}\leq 0$ and we deduce that $\varkappa_{p,q}$ admits non-positive  values. Let us note that $\varkappa_{p,q}=0$ when $\frac{1}{q}=n(\frac{1}{2}-\frac{1}{p})$ and $\varkappa_{p,q}<0$ for those parameters $p,q$ satisfying $\frac{1}{q}>n(\frac{1}{2}-\frac{1}{p}).$ This analysis shows that our main theorem admits different values of $p,q$ satisfying $\varkappa_{p,q}=0$ and $\varkappa_{p,q}<0.$ In view of the embedding $L^2\hookrightarrow W^{\varkappa,\,\mathcal{L}}$ for $\varkappa<0,$ we have showed that the usual condition $f\in L^{2}$ can be improved if we  consider those cases mentioned above where $\varkappa_{p,q}<0.$
\end{remark}

\begin{remark}
 The seminal work by Keel and  Tao \cite{KeelTao} shows that   the following Strichartz estimate
\begin{equation}\label{KT'}
\Vert v(t,x)\Vert_{L^q[(0,\infty), L_x^p(\mathbb{R}^d)]}\leq C\Vert g\Vert_{L^2(\mathbb{R}^n)},
\end{equation}
for the Schr\"odinger equation associated to the Laplacian $\Delta_{x}$ on $\mathbb{R}^n,$ given by
\begin{equation}
    iv_t(t,x)+\Delta_{x}v(x,t)=0,\,\,\,v(0,\cdot)=g,
\end{equation}
 holds true, if and only if $n=1$ and $2\leq p\leq \infty,$ $n=2$ and $2\leq p<\infty$ and $2\leq p<\frac{2n}{n-2}$ for $n\geq 3.$ Again, in view of the embedding $L^2\hookrightarrow W^{\varkappa,\,\mathcal{L}}$ for $\varkappa<0,$ we observe that the sharp situation $g\in L^{2}$ can be improved in our model,  by considering $f\in W^{\varkappa_{p,q},\,\mathcal{L}},$ if we analyse those cases in Remark \ref{r1} where $\varkappa_{p,q}<0.$

\end{remark}

 The plan of this paper is as follows. In the Section \ref{Preliminaries} we discuss some spectral properties of the twisted Laplacian. Finally, in Section \ref{Regularity properties} we prove our main theorem.

\section{Spectral decomposition of the twisted Laplacian }\label{Preliminaries}
In this section we provide some known facts on the spectrum of the twisted Laplacian and its relation with the harmonic oscillator. We will follow Thangavelu \cite{Thangavelu} for the spectral analysis of the twisted Laplacian.
Let  $H=-\Delta+|x|^2$ be the Hermite operator or (quantum) \emph{harmonic oscillator}. This operator extends to an unbounded self-adjoint operator on $L^{2}(\mathbb{R}^n) $, and its spectrum consists of the discrete set, $\lambda_\nu:=2|\nu|+n,$ $\nu\in \mathbb{N}_0^n,$  with a set of \emph{real eigenfunctions} $\phi_\nu, $ $\nu\in \mathbb{N}_0^n,$ (called Hermite functions) which provide an orthonormal basis of ${L}^2(\mathbb{R}^n).$
 Every Hermite function  $\phi_{\nu}$  on $\mathbb{R}^n$ has the form
\begin{equation}
\phi_\nu=\Pi_{j=1}^n\phi_{\nu_j},\,\,\, \phi_{\nu_j}(x_j)=(2^{\nu_j}\nu_j!\sqrt{\pi})^{-\frac{1}{2}}H_{\nu_j}(x_j)e^{-\frac{1}{2}x_j^2}
\end{equation}
where $x=(x_1,\cdots,x_n)\in\mathbb{R}^n$, $\nu=(\nu_1,\cdots,\nu_n)\in\mathbb{N}^n_0,$ and $$H_{\nu_j}(x_j):=(-1)^{\nu_j}e^{x_j^2}\frac{d^k}{dx_{j}^k}(e^{-x_j^2})$$ denotes the Hermite polynomial of order $\nu_j.$  By the spectral theorem, for every $f\in\mathscr{D}(\mathbb{R}^n)$ we have
\begin{equation}
Hf(x)=\sum_{\nu\in\mathbb{N}^n_0}\lambda_\nu\widehat{f}(\phi_\nu)\phi_\nu(x),\,\,\,
\end{equation} where $\widehat{f}(\phi_v) $ is the Hermite-Fourier transform of $f$ at $\nu$ defined by
\begin{equation}\label{ip} \widehat{f}(\phi_\nu) :=\langle f,\phi_\nu \rangle_{L^2(\mathbb{R}^n)}=\int_{\mathbb{R}^n}f(x)\phi_\nu(x)\,dx,\end{equation} where we omit the complex conjugate of $\phi_\nu(x),$ in the inner-product \eqref{ip},  because these functions are real-valued.
For every $\mu,\nu\in \mathbb{N}_0^n,$ the special Hermite function $\phi_{\mu\nu}$ is defined by
\begin{equation}
    \phi_{\mu\nu}(z)=(2\pi)^{-\frac{n}{2}}\int_{\mathbb{R}^n}e^{ix\cdot \xi}\phi_{\mu}(\xi+\frac{1}{2}y)\phi_{\nu}(\xi-\frac{1}{2}y)d\xi,\,\,\,z=(x,y)\in \mathbb{R}^{2n}.
\end{equation} A direct computation shows that
\begin{equation}
    \mathcal{L}\phi_{\mu\nu}=(2|\nu|+n)\phi_{\mu\nu}.
\end{equation} The  eigenvalues  $\lambda_\nu:=(2|\nu|+n), \,\nu\in \mathbb{N}_0^n,$ are the Laudau levels associated  to the  eigenfunctions $\phi_{\mu\nu}.$ It can be proved that  they also form a complete
orthonormal system in $L^2(\mathbb{R}^{2n}).$ Consequently, every function $f\in L^2(\mathbb{R}^{2n})$ has an expansion in terms of the special  Hermite functions given by
\begin{equation}
f=\sum_{\mu,\nu}\langle f, \phi_{\mu\nu}\rangle \phi_{\mu\nu}.
\end{equation} Newly, the spectral theorem applied to $\mathcal{L}$ gives
\begin{equation}
    \mathcal{L}f=\sum_{\mu,\nu} (2|\nu|+n)\widehat{f}(\phi_{\mu\nu})  \phi_{\mu\nu},\,\, \widehat{f}(\phi_{\mu\nu}):=\langle f, \phi_{\mu\nu}\rangle_{L^2(\mathbb{R}^{2n})},
\end{equation} for every $f\in \textnormal{Dom}(\mathcal{L}).$

The main tool in the harmonic analysis of the twisted Laplacian is the special Hermite semigroup, which we introduce as follows.  If $P_{\ell},$ $\ell\in2\mathbb{N}_0+n,$ is the spectral projection on $L^{2}(\mathbb{R}^{2n})$ given by
\begin{equation}
P_{\ell}f(z):=\sum_{\mu, 2|\nu|+n=\ell}\widehat{f}(\phi_{\mu\nu})\phi_{\mu\nu}(z),\,\,z=(x,y)\in\mathbb{R}^{2n},
\end{equation}
 then,  the special  Hermite semigroup (semigroup associated to the twisted Laplacian) $T_{t}:=e^{-t\mathcal{L}},$ $t>0$ is given by
 \begin{equation}
 e^{-t\mathcal{L}}f(z)=\sum_{\ell}e^{-t\ell}P_{\ell}f(z).
 \end{equation}
 By the functional calculus, and taking into account that the operator $\mathcal{L}$ is positive, the power $\mathcal{L}^s$,  $s\in \mathbb{R},$ of $\mathcal{L},$ can be defined in terms of the spectral projections $P_{\ell},$ $\ell\in 2\mathbb{N}_0+1,$ by
 \begin{equation}
 \mathcal{L}^sf(z)=\sum_{\ell}\ell^sP_{\ell}f(z),
 \end{equation} for every  $f\in \textnormal{Dom}(\mathcal{L}^s).$ Because we want to study the Schr\"odinger equation associated to $\mathcal{L}$ it is natural to consider for our analysis the Sobolev spaces associated to $\mathcal{L}$ which can be defined as follow.
 \begin{definition} Let $s\in \mathbb{R}.$ The $\mathcal{L}$-Sobolev space $W^{s,\,\mathcal{L}}(\mathbb{R}^{2n})$ consists of the completion of $\textnormal{Dom}(\mathcal{L}^s)$ (which is defined by those   functions $f$ satisfying  $\mathcal{L}^sf\in L^2(\mathbb{R}^{2n})$) with respect to the norm 
 \begin{equation}
     \Vert f\Vert_{W^{s,\,\mathcal{L}}(\mathbb{R}^{2n})}:=\Vert \mathcal{L}^s(f) \Vert_{L^2(\mathbb{R}^{2n})}=\left(\sum_{\ell}\ell^{2s}\Vert P_\ell f \Vert^2_{L^2(\mathbb{R}^{2n})}\right)^{\frac{1}{2}},
 \end{equation}for every $f\in \textnormal{Dom}(\mathcal{L}^s).$
 
 \end{definition}

 In this paper we also estimate the mixed norms $L^p_x(L^q_t)$ of  solutions to Sch\"rodinger equations by using the following version of Triebel-Lizorkin spaces associated to $\mathcal{L}$.
 \begin{definition}
Let us consider $1\leq p\leq \infty,$ $r\in\mathbb{R}$ and $0<q\leq \infty.$ The Triebel-Lizorkin space $\mathpzc{F}^r_{p,q}(\mathbb{R}^{2n}),$ associated to $\mathcal{L},$ to the family of projections $P_{\ell},$ $\ell \in 2\mathbb{N}+n,$ and to the parameters $p,q$ and $r$ is defined by those complex functions $f$ on $\mathbb{R}^{2n},$ satisfying
\begin{equation}
\Vert f\Vert_{\mathpzc{F}^r_{p,q}(\mathbb{R}^{2n})}:=\left\Vert \left(\sum_\ell \ell^{rq}|P_\ell f(z)|^{q}\right)^{\frac{1}{q}}\right\Vert_{L^p(\mathbb{R}_{z}^{2n})}<\infty.
\end{equation}
 \end{definition}
  The following are  embedding properties of such spaces and they can be proved from the definition. Because the proof is verbatim to one given for Triebel-Lizorkin spaces on $\mathbb{R}^n,$ we refer  the reader to Triebel \cite{Triebel1983,Triebel2006} and Lizorkin \cite{Lizorkin} for details.  
 \begin{itemize}
\item[(1)] ${\mathpzc{F}}^{r+\varepsilon}_{p,q_1}\hookrightarrow {\mathpzc{F}}^{r}_{p,q_1}\hookrightarrow {\mathpzc{F}}^{r}_{p,q_2}\hookrightarrow \mathpzc{F}^{r}_{p,\infty},$  $\varepsilon>0,$ $0<p\leq \infty,$ $0<q_{1}\leq q_2\leq \infty.$
\item[(2)]  $\mathpzc{F}^{r+\varepsilon}_{p,q_1}\hookrightarrow \mathpzc{F}^{r}_{p,q_2}$, $\varepsilon>0,$ $0<p\leq \infty,$ $1\leq q_2<q_1<\infty.$
\item[(3)] $\mathpzc{F}^0_{2,2}(\mathbb{R}^{2n})=L^2(\mathbb{R}^{2n})$ and consequently, for every $s\in\mathbb{R},$ ${W}^{s,\,\mathcal{L}}(\mathbb{R}^{2n})=\mathpzc{F}^s_{2,2}(\mathbb{R}^{2n}).$  
\end{itemize}
With the notations above in the next section we prove our main result.

\section{Regularity properties}\label{Regularity properties}

In this section, the space $L^2_{\textnormal{fin}}(\mathbb{R}^{2n})$ consists of those finite linear combinations of special Hermite functions on $\mathbb{R}^{2n}.$

\begin{remark}\label{remark} The set,  $L^2_{\textnormal{fin}}(\mathbb{R}^{2n})$ is a dense subspace of every space
$W^{s,\,\mathcal{L}}{(\mathbb{R}^{2n})},$ this can be proved by using the Parseval identity applied to the special Hermite functions. An argument of convergence of series shows that $L^2_{\textnormal{fin}}(\mathbb{R}^{2n})$ is a dense subspace of every space $\mathpzc{F}^r_{p,2}(\mathbb{R}^{2n}).$
\end{remark}
We will use the previous remark  in the following result.
\begin{lemma}
Let us consider $f\in \mathpzc{F}^0_{p,2} (\mathbb{R}^{2n} )$, then for all $1\leq p\leq \infty$  we have 
\begin{equation}\label{eql2lp}
\Vert u(t,z) \Vert_{L^p_z[\mathbb{R}^{2n},\,L^2_t[0,2\pi]]}=\sqrt{2\pi}\Vert f \Vert_{\mathpzc{F}^0_{p,2}(\mathbb{R}^{2n})}.
\end{equation}
\end{lemma}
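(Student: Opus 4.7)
The plan is to use the spectral representation of the solution together with the orthogonality of the time-frequency exponentials on the interval $[0,2\pi]$. Since $iu_t=\mathcal{L}u$, the solution can be written via the functional calculus as $u(t,z)=e^{-it\mathcal{L}}f(z)=\sum_{\ell}e^{-it\ell}P_\ell f(z)$, where $\ell$ ranges over $2\mathbb{N}_0+n$. The crucial observation is that these indices $\ell$ are distinct integers, so the family $\{e^{-it\ell}\}_\ell$ is orthogonal on $[0,2\pi]$ with $\int_0^{2\pi}e^{-it(\ell-\ell')}\,dt=2\pi\delta_{\ell\ell'}$.

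First I would work on the dense subspace $L^2_{\textnormal{fin}}(\mathbb{R}^{2n})$ of Remark \ref{remark}, where the spectral sum is finite and all interchanges of sums and integrals are trivially justified. For such $f$, Parseval's identity applied pointwise in $z$ yields
\begin{equation*}
\int_0^{2\pi}|u(t,z)|^2\,dt=2\pi\sum_{\ell}|P_\ell f(z)|^2.
\end{equation*}
Taking square roots and then the $L^p_z$ norm on $\mathbb{R}^{2n}$ gives
\begin{equation*}
\Vert u(t,z)\Vert_{L^p_z L^2_t}=\sqrt{2\pi}\left\Vert\left(\sum_\ell|P_\ell f(z)|^2\right)^{1/2}\right\Vert_{L^p_z}=\sqrt{2\pi}\,\Vert f\Vert_{\mathpzc{F}^0_{p,2}},
\end{equation*}
which is exactly the desired identity \eqref{eql2lp}.

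To conclude for general $f\in\mathpzc{F}^0_{p,2}(\mathbb{R}^{2n})$, I would extend by density. By Remark \ref{remark}, $L^2_{\textnormal{fin}}(\mathbb{R}^{2n})$ is dense in $\mathpzc{F}^0_{p,2}(\mathbb{R}^{2n})$. Since both sides of \eqref{eql2lp} define seminorms (indeed norms) that agree on a dense subspace and the left-hand side depends continuously on $f$ through the unitary propagator $e^{-it\mathcal{L}}$, the equality extends to all of $\mathpzc{F}^0_{p,2}(\mathbb{R}^{2n})$.

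I do not expect any real obstacle: the computation is simply Plancherel in the $t$ variable combined with the integer spectral gap of $\mathcal{L}$. The only subtle point is checking that the spectral sum for $u(t,z)$ can be rearranged so that Parseval in $t$ applies pointwise in $z$, and this is handled by first restricting to $L^2_{\textnormal{fin}}(\mathbb{R}^{2n})$ and then invoking density.
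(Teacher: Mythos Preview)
Your proof is correct and follows essentially the same route as the paper: reduce to $L^2_{\textnormal{fin}}(\mathbb{R}^{2n})$ by density, expand $u(t,z)=\sum_\ell e^{-it\ell}P_\ell f(z)$, use orthogonality of the exponentials on $[0,2\pi]$ to compute $\Vert u(t,z)\Vert_{L^2_t}^2=2\pi\sum_\ell|P_\ell f(z)|^2$ pointwise in $z$, and then take the $L^p_z$ norm. Your treatment of the density extension is slightly more explicit than the paper's, but the argument is the same.
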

\begin{proof}
Let us consider $f\in \mathpzc{F}^0_{p,2}(\mathbb{R}^{2n}).$ By a standard argument of density we only need to assume $f\in L^2_{\textnormal{fin}}(\mathbb{R}^{2n}).$ The solution $u(t,z)$ for \eqref{SEq} is given by
\begin{equation}
u(t,z)=\sum_{\mu,\nu\in\mathbb{N}^n_0}e^{-it(2|\nu|+n)}\widehat{f}(\phi_{\mu\nu})\phi_{\mu\nu}(z)=\sum_{\ell}e^{-it\ell}P_{\ell}f(z).
\end{equation}
Let us note that the previous sums runs over a finite number of $\ell's$ because $f\in L^2_{\textnormal{fin}}(\mathbb{R}^{2n}).$ So, we have,
\begin{align*}
\Vert u(t,z) \Vert_{L^2_t[0,2\pi]}^2&=\int_{0}^{2\pi}u(t,z)\overline{u(t,z)}dt\\
&=\int_{0}^{2\pi} \sum_{\ell,\ell'}e^{-it(\ell-\ell')}P_{\ell}f(z)\overline{ P_{\ell'}f(z)  }  dt \\
&= \sum_{\ell,\ell'}\int_{0}^{2\pi}e^{-it(\ell-\ell')}dtP_{\ell}f(z)\overline{ P_{\ell'}f(z)  } \\&    =  \sum_{\ell} 2\pi P_{\ell}f(z)\overline{ P_{\ell}f(z)  }   \\
&=\sum_{\ell}2\pi\,\cdot|P_{\ell}f(z)|^2, 
\end{align*}
where we have used   the $L^2$-orthogonality of the trigonometric polynomials. Thus, we conclude the following fact
\begin{equation}\label{identity1}
\Vert u(t,z) \Vert_{L^2_t[0,2\pi]}=\left(\sum_{\ell}2\pi\,\cdot|P_{\ell}f(z)|^2\right)^{\frac{1}{2}},\,\,\,  f\in L^2_{\textnormal{fin}}(\mathbb{R}^{2n}).
\end{equation}
Finally,
\begin{equation}
\Vert u(t,z)\Vert_{L^p_{z}(\mathbb{R}^{2n},L^2_t[0,2\pi] )}=\sqrt{2\pi}\Vert f\Vert_{\mathpzc{F}^0_{p,2}(\mathbb{R}^{2n})}.
\end{equation}

\end{proof}

The following lemma, allows us to compare the $L^p_{z}(L^q_{t})$-mixed norms of the solution $u(t,z)$ and some Triebel-Lizorkin norms of the initial data.

\begin{lemma}\label{T1}
Let $p,q$ and $s_q$ be such that $0<p\leq \infty,$ $2\leq q<\infty$ and $s_{q}:=\frac{1}{2}-\frac{1}{q}.$ Then
\begin{equation}
C_p'\Vert f\Vert_{\mathpzc{F}^0_{p,2}}\leq \Vert u(t,z)\Vert_{L^p_{z}(\mathbb{R}^{2n}, L^q_{t}[0,2\pi])} \leq C_{p,s}\Vert f\Vert_{\mathpzc{F}^s_{p,2}}, 
\end{equation}
holds true for every $s\geq s_{q}.$ 
\end{lemma}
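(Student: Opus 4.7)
The two inequalities will be handled independently. The lower bound is reduced to the $q=2$ case already treated in the previous lemma by inserting a Hölder inequality in the $t$-variable. The upper bound is obtained by applying, for each fixed $z$, a Sobolev-type embedding on the torus $\mathbb{T}=[0,2\pi]$ to the trigonometric polynomial $t\mapsto u(t,z)=\sum_{\ell}e^{-it\ell}P_{\ell}f(z)$, and then integrating the resulting pointwise bound in $z$.

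\textbf{Lower bound.} Since $q\geq 2$ and $[0,2\pi]$ has finite Lebesgue measure, Hölder's inequality gives, pointwise in $z$,
\begin{equation*}
\|u(\cdot,z)\|_{L^{2}_{t}[0,2\pi]}\leq (2\pi)^{\frac{1}{2}-\frac{1}{q}}\|u(\cdot,z)\|_{L^{q}_{t}[0,2\pi]}.
\end{equation*}
Taking the $L^{p}_{z}(\mathbb{R}^{2n})$-norm on both sides and invoking \eqref{eql2lp} produces
\begin{equation*}
\sqrt{2\pi}\,\|f\|_{\mathpzc{F}^{0}_{p,2}}=\|u\|_{L^{p}_{z}L^{2}_{t}}\leq (2\pi)^{\frac{1}{2}-\frac{1}{q}}\|u\|_{L^{p}_{z}L^{q}_{t}},
\end{equation*}
which gives the lower bound with constant $C_{p}'=(2\pi)^{1/q}$.

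\textbf{Upper bound.} By Remark \ref{remark} we may assume $f\in L^{2}_{\textnormal{fin}}(\mathbb{R}^{2n})$, so that $u(t,z)=\sum_{\ell}e^{-it\ell}P_{\ell}f(z)$ is a finite trigonometric polynomial in $t$ with integer frequencies $\ell\in 2\mathbb{N}_{0}+n$, all satisfying $\ell\geq n\geq 1$. For each fixed $z$, the classical Sobolev embedding on the circle $H^{s_{q}}(\mathbb{T})\hookrightarrow L^{q}(\mathbb{T})$ for $s_{q}=\tfrac{1}{2}-\tfrac{1}{q}$, $2\leq q<\infty$ (equivalently Wainger's theorem on Fourier series), yields
\begin{equation*}
\|u(\cdot,z)\|_{L^{q}_{t}[0,2\pi]}\leq C_{q}\left(\sum_{\ell}\ell^{2s_{q}}|P_{\ell}f(z)|^{2}\right)^{1/2}.
\end{equation*}
Taking the $L^{p}_{z}$-norm of both sides produces $\|u\|_{L^{p}_{z}L^{q}_{t}}\leq C_{q}\|f\|_{\mathpzc{F}^{s_{q}}_{p,2}}$, and for any $s\geq s_{q}$ we conclude by the embedding $\mathpzc{F}^{s}_{p,2}\hookrightarrow \mathpzc{F}^{s_{q}}_{p,2}$ recorded in item (1) after the definition of these spaces.

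\textbf{Main obstacle.} The only delicate step is the endpoint $s=s_{q}$ of the $t$-Sobolev embedding: for $s>s_{q}$ a straightforward Cauchy--Schwarz with the summability of $\sum_{\ell}\ell^{-2s}$ suffices, but the critical exponent requires Wainger's theorem (or, equivalently, the Triebel--Lizorkin embedding $F^{s_{q}}_{2,2}(\mathbb{T})\hookrightarrow F^{0}_{q,2}(\mathbb{T})=L^{q}(\mathbb{T})$ obtained from Littlewood--Paley theory). Once this sharp one-dimensional embedding is in place, the rest of the argument is a pointwise-in-$z$ application followed by routine integration, together with the embeddings of $\mathpzc{F}^{s}_{p,2}$-spaces already listed in Section \ref{Preliminaries}.
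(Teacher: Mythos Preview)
Your proof is correct and follows essentially the same route as the paper's own argument: the lower bound is obtained by H\"older in $t$ (using $q\geq 2$ on the finite interval $[0,2\pi]$) combined with the $q=2$ identity \eqref{eql2lp}, and the upper bound is obtained by applying, for each fixed $z$, Wainger's Sobolev embedding $H^{s_q}(\mathbb{T})\hookrightarrow L^q(\mathbb{T})$ to the trigonometric polynomial $t\mapsto u(t,z)$, then taking $L^p_z$-norms and invoking the embedding $\mathpzc{F}^{s}_{p,2}\hookrightarrow \mathpzc{F}^{s_q}_{p,2}$ for $s\geq s_q$. Your identification of the critical step (the endpoint $s=s_q$ requiring Wainger's theorem rather than a naive Cauchy--Schwarz) matches precisely the tool the paper invokes.
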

\begin{proof}
Let us  consider, by a density argument, the initial data $f\in L^2_f(\mathbb{R}^n).$ In order to estimate the norm  $\Vert u(t,z) \Vert_{L^p_z[\mathbb{R}^{2n},\,L^q_t[0,2\pi]]}$ we can use the Wainger Sobolev embedding Theorem:
\begin{equation}
\left\Vert \sum_{\ell\in \mathbb{Z},\ell\neq 0}|\ell|^{-\alpha}\widehat{F}(\ell)e^{-i\ell t}\right\Vert_{L^q[0,2\pi]}\leq C\Vert F\Vert_{L^r[0,2\pi]},\,\,\alpha:=\frac{1}{r}-\frac{1}{q}.
\end{equation}
For $s_q:=\frac{1}{2}-\frac{1}{q}$  we have
\begin{align*}
\Vert u(t,z)\Vert_{L^q[0,2\pi]} &=\left\Vert \sum_{\ell} e^{-it\ell}P_{\ell}f(z)  \right\Vert_{L^q[0,2\pi]} \\
&\leq C\left\Vert \sum_{\ell} \ell^{s_q}e^{-it\ell}P_{\ell}f(z)  \right\Vert_{L^2[0,2\pi]}\\
&=C\left\Vert \sum_{\ell} e^{-it\ell}P_{\ell}[\mathcal{L}^{s_q}f(z)]  \right\Vert_{L^2[0,2\pi]}\\
&=C\left( \sum_{\ell} |P_{\ell}[\mathcal{L}^{s_q}f(z)|^2  \right)^{\frac{1}{2}}\\
&:=T'(\mathcal{L}^{s_q}f)(z).
\end{align*}
 So, we have
\begin{equation}\label{31}
\Vert u(t,z) \Vert_{L^p_z[\mathbb{R}^{2n},\,L^q_t[0,2\pi]]}\leq C\Vert T'(H^{s_q}f) \Vert_{L^p(\mathbb{R}^{2n})}= C\Vert \mathcal{L}^{s_q}f\Vert_{\mathpzc{F}^0_{p,2}(\mathbb{R}^{2n})}=C\Vert f\Vert_{\mathpzc{F}^{s_q}_{p,2}(\mathbb{R}^{2n})}.
\end{equation}
We end the proof by taking into account the embedding $\mathpzc{F}^s_{p,2}\hookrightarrow \mathpzc{F}^{s_q}_{p,2}$ for every $s>s_q$ and the following inequality for $2\leq q<\infty$
\begin{align*}
\Vert f\Vert_{\mathpzc{F}^0_{p,2}}&\lesssim  \Vert T'f\Vert_{L^p}\\
&= C \Vert u(t,z) \Vert_{L^p_z[\mathbb{R}^{2n},\,L^2_t[0,2\pi]]}\lesssim \Vert u(t,z) \Vert_{L^p_z[\mathbb{R}^{2n},\,L^q_t[0,2\pi]]}.
\end{align*}
\end{proof}

Now, we present the main tool in our further analysis (for the proof we refer the reader to  H. Koch and F. Ricci\cite{KochRicci2007}, the references \cite{Rat1997,Ste1998} include some results for $p\neq \frac{4n+2}{2n-1}$).

\begin{theorem}\label{mainlemmata}
Let $n\in \mathbb{N},$ and $2\leq p\leq \infty.$ Let us consider the orthogonal projection $P_{\ell},$ $\ell\in 2\mathbb{N}+n.$ Then we have
\begin{equation}
    \Vert P_{\ell}f \Vert_{L^p(\mathbb{R}^{2n})}\leq C\ell^{\varkappa_p}\Vert f\Vert_{L^2(\mathbb{R}^{2n})},
\end{equation}where $\varkappa_p=\frac{1}{2}(\frac{1}{p}-\frac{1}{2})$ for $2\leq p\leq \frac{4n+2}{2n-1}$ and $\varkappa_p=n(\frac{1}{2}-\frac{1}{p})-\frac{1}{2}$ for $\frac{4n+2}{2n-1}\leq p\leq \infty.$ The exponent $\varkappa_p$ is the best possible.

\end{theorem}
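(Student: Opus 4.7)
The plan is to reduce the projection bound to an explicit analysis of the reproducing kernel of $P_\ell$, establish two endpoint estimates at $p=\infty$ and at the Tomas--Stein index $p_*=\frac{4n+2}{2n-1}$, and interpolate between them.

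First I would obtain a closed form for the reproducing kernel of $P_\ell$. Summing the definition of the special Hermite functions and applying Mehler's formula one finds
\begin{equation*}
K_\ell(z,w):=\sum_{\mu,\,2|\nu|+n=\ell}\phi_{\mu\nu}(z)\,\overline{\phi_{\mu\nu}(w)}=c_n\,L_{k}^{n-1}\!\bigl(\tfrac12|z-w|^2\bigr)\,e^{-\tfrac14|z-w|^2}\,e^{\tfrac{i}{2}\sigma(z,w)},
\end{equation*}
where $k=(\ell-n)/2$ and $\sigma$ is the symplectic form on $\mathbb{R}^{2n}$. Equivalently, $P_\ell$ is the twisted convolution with the Laguerre function $\varphi_\ell(z)=c_n L_k^{n-1}(\tfrac12|z|^2)e^{-|z|^2/4}$, which exhibits the twisted-translation invariance of $P_\ell$. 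Since $L_k^{n-1}(0)=\binom{k+n-1}{k}\asymp\ell^{n-1}$, we get $K_\ell(z,z)\asymp\ell^{n-1}$, and Cauchy--Schwarz delivers the sharp $L^2\to L^\infty$ endpoint
\begin{equation*}
|P_\ell f(z)|\leq\sqrt{K_\ell(z,z)}\,\|f\|_{L^2}\lesssim\ell^{(n-1)/2}\|f\|_{L^2},
\end{equation*}
which is precisely $\varkappa_\infty=(n-1)/2$.

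Next I would prove the critical restriction estimate $\|P_\ell f\|_{L^{p_*}}\lesssim \ell^{-1/(2(2n+1))}\|f\|_{L^2}$ at the Tomas--Stein index $p_*=\tfrac{2(d+1)}{d-1}$ with $d=2n$. Since $P_\ell=P_\ell^*=P_\ell^2$, the $TT^*$ identity reduces this to the Young-type bound $\|\varphi_\ell*_\sigma g\|_{L^{p_*}}\lesssim \ell^{-1/(2n+1)}\|g\|_{L^{p_*'}}$ for twisted convolution. The necessary pointwise control of $\varphi_\ell$ is the classical Plancherel--Rotach/Erd\'elyi asymptotics for Laguerre functions: $\varphi_\ell$ is polynomially bounded in the elliptic region $|z|^2\ll 4k$, has Airy-type transitional behaviour of size $k^{n/2-3/4}|z|^{-1/2}$ with an oscillatory phase on the shell $|z|^2\sim 4k$ (a hypersurface carrying curvature analogous to the sphere in $\mathbb{R}^{d}$), and decays rapidly beyond. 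A dyadic decomposition of $\varphi_\ell$ into these zones followed by a stationary-phase estimate on the transitional annulus yields the required bound.

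With the endpoints $\varkappa_2=0$ and $\varkappa_{p_*}=-1/(2(2n+1))$ in hand, Riesz--Thorin complex interpolation gives the linear profile $\varkappa_p=\tfrac12(\tfrac1p-\tfrac12)$ on $[2,p_*]$, and interpolating between $\varkappa_{p_*}$ and $\varkappa_\infty=(n-1)/2$ yields $\varkappa_p=n(\tfrac12-\tfrac1p)-\tfrac12$ on $[p_*,\infty]$. For sharpness, testing against $f=K_\ell(0,\cdot)/\|K_\ell(0,\cdot)\|_{L^2}$ forces $P_\ell f(0)\asymp\ell^{(n-1)/2}$, saturating $p=\infty$; a Knapp-type wave packet concentrated in a single oscillatory lobe of a special Hermite function saturates $p=p_*$; the intermediate sharpness is then inherited from log-convexity along the interpolation lines. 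The principal obstacle is the restriction estimate at $p_*$: controlling the Laguerre kernel in its Airy transition zone requires delicate oscillatory-integral analysis, and this is the analytic heart of the Koch--Ricci theorem.
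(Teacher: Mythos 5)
You should first note that the paper does not prove this statement at all: Theorem \ref{mainlemmata} is quoted verbatim from Koch and Ricci \cite{KochRicci2007} (with \cite{Rat1997,Ste1998} cited for the non-critical exponents), so there is no internal argument to compare with. Your outline does follow the strategy of that literature — represent $P_\ell$ as twisted convolution with the Laguerre function $\varphi_k$, $k=(\ell-n)/2$, get the $L^2\to L^\infty$ endpoint from $K_\ell(z,z)=c_nL_k^{n-1}(0)\asymp \ell^{\,n-1}$ by Cauchy--Schwarz, reduce the critical exponent $p_*=\frac{4n+2}{2n-1}$ to an $L^{p_*'}\to L^{p_*}$ bound via $TT^*$, and interpolate — and your numerology ($\varkappa_\infty=\frac{n-1}{2}$, $\varkappa_{p_*}=-\frac{1}{2(2n+1)}$, linearity in $1/p$ on each interval) is consistent with the statement. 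But the decisive step, the twisted-convolution estimate $\Vert \varphi_k\times g\Vert_{L^{p_*}}\lesssim \ell^{-1/(2n+1)}\Vert g\Vert_{L^{p_*'}}$, is only announced, not proved. ``Plancherel--Rotach asymptotics plus dyadic decomposition plus stationary phase'' is a program rather than an argument: pointwise bounds on $\varphi_k$ fed into Young's inequality for twisted convolution cannot reach the critical exponent; one must genuinely exploit oscillation (of the Laguerre function in the bulk region $|z|^2<4k$, which oscillates there and is not merely ``polynomially bounded'', together with the twisted phase $e^{\frac{i}{2}\sigma(z,w)}$), typically through a Stein--Tomas-type analytic-family/complex-interpolation argument. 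As you yourself concede, this is the analytic heart of the Koch--Ricci theorem, and it is exactly the part your proposal leaves open.

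There is also a concrete error in the sharpness part. Riesz--Thorin gives \emph{log-convexity of upper bounds}: knowing that the operator norm attains the claimed value at $p=\infty$ and at $p=p_*$ does not force it to attain the interpolated value at intermediate $p$, since a convex function may lie strictly below its chord. To prove optimality of $\varkappa_p$ for each $p$ you need explicit lower-bound examples in each regime: for $p\geq p_*$ testing with $f=\varphi_k$ itself (equivalently the kernel $K_\ell(0,\cdot)$) and using the known $L^p$ norms of Laguerre functions gives $\Vert P_\ell\Vert_{L^2\to L^p}\gtrsim \ell^{\,n(\frac12-\frac1p)-\frac12}$, while for $2<p<p_*$ a different family (a Knapp/coherent-state type example concentrated on the appropriate scale) is required; ``inherited from log-convexity'' does not close this. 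So the proposal is a reasonable roadmap to the cited theorem, but both the critical restriction estimate and the intermediate-exponent sharpness are genuine gaps as written.
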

Now, with the analysis developed above and by using Theorem \eqref{mainlemmata} we will provide a short proof for our main result.
\begin{proof}[Proof of Theorem \ref{MAINtHEO'}]
We observe that from Lemma \ref{T1}, $$ C'\Vert f\Vert_{\mathpzc{F}^0_{p,2}}\leq \Vert u(t,z)\Vert_{L^p_{z}(\mathbb{R}^{2n}, L^q_{t}[0,2\pi])} \leq C_{p,s}\Vert f\Vert_{\mathpzc{F}^s_{p,2}},  $$ so, we only need to estimate the $\mathpzc{F}^s_{p,2}$-norm of the initial data $f,$ by showing that $\Vert f\Vert_{\mathpzc{F}^s_{p,2}}\lesssim \Vert f\Vert_{W^{s,\,\mathcal{L}}},$ for every $s\geq\varkappa_{p,q}. $ Moreover, by the embedding $W^{s,\,\mathcal{L}}\hookrightarrow W^{\varkappa_{p,q},\,\mathcal{L}}$ for every $s\geq \varkappa_{p,q}$ we only need to check the previous estimate when $s=\varkappa_{p,q}.$ By the condition $2\leq p<\infty,$  together with  the Minkowski integral inequality and Theorem \ref{mainlemmata}, we  give
\begin{align*}
\Vert f\Vert_{\mathpzc{F}^{s_q}_{p,2}} &=\left\Vert \left( \sum_{\ell} {\ell}^{2s_q}|P_{\ell}f(z)|^2  \right)^{\frac{1}{2}}\right\Vert_{L^p}\\&\leq \left( \sum_{\ell} {\ell}^{2s_q}\Vert P_{\ell}f\Vert^2_{L^p}  \right)^{\frac{1}{2}}\\
&\lesssim \left( \sum_{\ell} {\ell}^{2(s_q+\varkappa_p)}\Vert P_{\ell}f\Vert^2_{L^2}  \right)^{\frac{1}{2}}\\
&=\Vert f\Vert_{W^{\varkappa_{p,q},\,\mathcal{L}}},
\end{align*}
where we have used that $\varkappa_{p,q}=s_q+\varkappa_p.$
Let us note that the previous estimates are valid for $p=\infty.$ The sharpness of the result for $q=2$ can be proved by choosing $f=P_{\ell'}g$ for some arbitrary, but non-trivial  function $g\in L^2.$ In this case, $\varkappa_{p,q}=\varkappa_p$ because $s_q=0,$ and we have 
$$  \Vert u(t,z) \Vert_{L^{p}_z[\mathbb{R}^{2n},\,L^2_t[0,2\pi]]}=\Vert P_{\ell'}g\Vert_{L^p}\leq \ell'^{\varkappa_p}\Vert P_{\ell'} g\Vert_{L^2}=\Vert g\Vert_{W^{\varkappa_p,\,\mathcal{L}}}.$$ The sharpness of this inequality is consequence of the sharpness for the exponent $\varkappa_p$ in Theorem \ref{mainlemmata}. Indeed,  an improvement of the previous estimate
would yield improved estimates for the spectral projection operator $P_{\ell'}$, which is not
possible. Thus, we finish the proof.
\end{proof}
\begin{remark} In relation with the work by Zhang
and Zheng \cite{ZhangZheng2010}, where the non-homogeneous Schr\"odinger equation associated to $\mathcal{L},$ has been studied,  the  the initial data $f\in L^2$ provides nice  Strichartz estimates as those considered in Keel and Tao \cite{KeelTao}. The analysis employed in \cite{ZhangZheng2010}  comes from suitable estimates for the operators $e^{-it\mathcal{L}},$ $t>0.$ In the homogeneous case \eqref{SEq}, in view of the embedding,
$L^2\hookrightarrow W^{\varkappa,\,\mathcal{L}}$ for $\varkappa<0,$ we have showed that the usual condition $f\in L^{2}$ can be improved if we  consider those cases mentioned in Remark \ref{r1} where $\varkappa_{p,q}<0.$ \end{remark}
\begin{remark}
On the other hand, by following
Bongioanni and Rogers \cite[Section 5]{BonRog}, Theorem \ref{MAINtHEO'} could provide similar estimates to the one obtained in \cite[Theorem 5.1]{BonRog}, for the Scr\"odinger equation associated to the forced twisted Laplacian $\mathcal{L}+V(z,t),$ where the potential $V(z,t),$ could be considered small enough  in some sense. This could be obtained from a routine argument via Duhamel's formula together with the contraction mapping principle.
\end{remark}

\textbf{Acknowledgment.} This work was supported by Pontificia Universidad Javeriana, Department of Mathematics. The author would like to kindly thank the reviewers of this manuscript for their detailed comments which helped to present this final version.

\end{document}